\newtheorem{thm}{Theorem}
\def \w {\mathbf{w}}
\def \v {\mathbf{v}}
\def \x {\mathbf{x}}
\def \x {\mathbf{x}}
\def \z {\mathbf{z}}
\def \y {\mathbf{y}}
\def \u {\mathbf{u}}
\def \y {\mathbf{y}}
\def \x {\mathbf{x}}
\def \z {\mathbf{z}}
\def \u {\mathbf{u}}
\def \w {\mathbf{w}}
\def \v {\mathbf{v}}
\begin{document}

\title[ ]{Frank-Wolfe Method is Automatically Adaptive to Error Bound Condition}
\author{\Name{Yi Xu} \Email{yi-xu@uiowa.edu}\\
\Name{Tianbao Yang} \Email{tianbao-yang@uiowa.edu}\\
\addr Department of Computer Science, The University of Iowa, Iowa City, IA 52242
}

\maketitle
\vspace*{-0.5in}
\begin{center}{October 10, 2018}\end{center}

\begin{abstract}
Error bound condition has recently gained revived interest  in optimization. It has been leveraged to derive faster convergence for many popular algorithms, including subgradient methods, proximal gradient method and accelerated proximal gradient method. However, it is still unclear whether the Frank-Wolfe (FW) method can enjoy faster convergence under error bound condition. In this short note, we give an affirmative answer to this question. We show that the FW method (with a line search for the step size) for optimization over a strongly convex set is automatically adaptive to the error bound condition of the problem. In particular, the iteration complexity  of FW can be characterized by  $O(\max(1/\epsilon^{1-\theta}, \log(1/\epsilon)))$ where $\theta\in[0,1]$ is a constant that characterizes the error bound condition. Our results imply that if the constrained set is characterized by a strongly convex function and the objective function can achieve a smaller value outside the considered domain, then the FW method enjoys a fast rate of $O(1/t^2)$. 
\end{abstract}

\section{Introduction}
In this draft, we consider the following constrained convex optimization problem:
\begin{align}\label{eqn:opt}
\min_{\x\in\Omega} f(\x)
\end{align}
where $f(\w)$ is a smooth function and $\Omega\subseteq\mathbf E$ is a bounded strongly convex set. We assume that linear optimization over $\Omega$ is much more cheaper than projection onto $\Omega$, which makes the FW method more suitable for solving the above problem than gradient methods. The goal of this paper is to show that the FW method is automatically adaptive to an error bound condition of the optimization problem. Below, we will first review the FW method and the error bound condition. In next section, we will prove that the FW method is automatically adaptive to the error bound condition. 

The original FW method, introduced by \cite{frank1956algorithm} (a.k.a. Conditional Graident method~\citep{levitin1966constrained}), is a projection-free fist-order method for minimizing smooth convex objective functions over a convex set. In recent years, the FW method has gained an increasing interest in large-scale optimization and machine learning (e.g., \citep{pmlr-v37-garbera15, freund2016new, nesterov2018complexity, narasimhan2018learning}). 	
Many existing works have shown the convergence rate of the standard FW method is $O(1/t)$ even for strongly convex objectives~\citep{clarkson2008coresets,hazan2008sparse,jaggi2013revisiting}, and in general the rate could not be improved. Under different assumptions or for some special cases, a series of works tried to get faster rates of the FW method and its variants~\citep{levitin1966constrained,demyanov1970approximate, dunn1979rates, guelat1986some,beck2004conditional,garber2013playing, lan2013complexity, lacoste2013affine, pmlr-v37-garbera15,lacoste2015global, lan2016conditional}. For example, for  minimizing smooth and strongly convex objective functions over a strongly convex set, \cite{pmlr-v37-garbera15} showed that the FM method enjoyed fast rate of $O(1/t^2)$.  

In this paper, we first consider the FW method shown in Algorithm~\ref{alg:0}, where $L_f$ denotes a smoothness constant of $f(\x)$ with respect to $\|\cdot\|$ such that $f(\x)\leq f(\y) + \nabla f(\y)^{\top}(\x - \y) + \frac{L_f}{2}\|\x - \y\|^2$ holds for any $\x, \y\in\Omega$. Note that both options for selecting the step size have been considered in the literature~\citep{jaggi2013revisiting, pmlr-v37-garbera15}. Option I requires evaluating the objective function but does not need to  know  the smoothness constant. Option II could be cheaper but requires knowing the Lipschitz constant of the gradient. Our analysis applies to both options. In the sequel, we will focus on option I, with which we have
\begin{align}\label{eqn:key}
f(\x_{t+1})&\leq f(\x_t + \eta(\y_t - \x_t)), \forall \eta\in[0,1]\nonumber\\
&\leq f(\x_t) + \eta(\y_{t} - \x_t)^{\top}\nabla f(\x_t) + \frac{\eta^2L_f}{2}\|\y_t - \x_t\|^2, \forall \eta\in[0,1]
\end{align}	
Note that for option II, the second inequality above still holds.
	
We consider the following definition of error bound condition for the optimization problem~\eqref{eqn:opt}. 
\begin{definition}[H\"{o}lderian error bound (HEB)]\label{def:1}
A function $f(\x)$ is said to satisfy a HEB condition on  $\Omega$ if there exist $\theta\in[0,1]$ and $0<c<\infty$ such that for any $\x\in\Omega$
\begin{align}\label{eqn:leb}
\min_{\w\in\Omega_*}\|\x - \w\|\leq c(f(\x) - f_*)^{\theta}.
\end{align}
where $\Omega_*$ denotes the optimal set of $\min_{\x\in\Omega}f(\x)$ and $f_*$ denotes the optimal objective value.  
\end{definition}
It is notable that $\theta=0$ is a trivial condition since it always hold due to that  $\Omega$ is a compact set.  The above HEB condition has been considered for deriving faster convergence of subgradient methods~\citep{yang2018rsg}, proximal gradient method~\citep{liu2017adaptive}, accelerated gradient method~\citep{xu2016homotopy}, and stochastic subgradient methods~\citep{pmlr-v70-xu17a}. It has been shown that many problems satisfy the above condition~\citep{xu2016homotopy, pmlr-v70-xu17a, DBLP:conf/nips/XuLLY17, liu2017adaptive, yang2018rsg}. For example, when functions are semi-algebraic and ``regular" (for instance, continuous), the above inequality is known to hold on any compact set (c.f. \citep{bolte2017error} and references therein). 
		
The last definition in this section is regarding the strongly convex set. 
\begin{definition}
A convex set $\Omega$ is a $\alpha$-strongly convex with respect to $\|\cdot\|$ if for any $\x, \y\in\Omega$, any $\gamma\in[0,1]$ and any vector $\z\in\mathbf E$ such that $\|\z\|=1$, it holds that 
\begin{align*}
\gamma\x + (1-\gamma)\y + \gamma(1-\gamma) \frac{\alpha}{2}\|\x - \y\|^2\z\in\Omega. 
\end{align*}
\end{definition}
{\bf Remark. }Many previous works~(e.g., \citep{levitin1966constrained,demyanov1970approximate, dunn1979rates, pmlr-v37-garbera15}) considered this condition of feasible set when studying the FW method.
\begin{algorithm}[t]
\caption{Frank-Wolfe Method} \label{alg:0}
\begin{algorithmic}
\STATE Initilization: $\x_0 \in\Omega$
\FOR{$t=0,\ldots, T$}
    	\STATE Compute $\y_t \in \arg\min_{\y\in\Omega}\nabla f(\x_t)^{\top}\y$\;
	\STATE Option I: Set $\eta_t = \arg\min_{\eta\in[0,1]}f(\x_t + \eta(\y_t - \x_t))$ \;
	\STATE Option II: Set $\eta_t = \arg\min_{\eta\in[0,1]}\eta(\y_t - \x_t)^{\top}\nabla f(\x_t) + \frac{\eta^2L_f}{2}\|\y_t - \x_t\|^2$ \;
 	\STATE Compute $\x_{t+1} =\x_t + \eta_t (\y_t - \x_t) $\;
\ENDFOR
\end{algorithmic}
\end{algorithm}

\section{Adaptive Convergence of the FW method}
In this section, we show that the FW method is {\it automatically} adaptive to the HEB condition, enjoying a faster convergence rate than the standard $O(1/t)$ rate without the knowledge of the HEB condition. 

We first prove the following lemma. 
\begin{lemma}\label{lem:1}
Assume $f(\x)$ obeys the HEB condition on $\Omega$ with $\theta\in[0,1]$, then it holds that 
\begin{align*}
\|\nabla f(\x)\|_*\geq \frac{1}{c}(f(\x) - f_*)^{1-\theta}.
\end{align*}
\end{lemma}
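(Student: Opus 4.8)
The plan is to combine the first-order convexity inequality with the HEB condition through the duality pairing of the primal norm $\|\cdot\|$ and its dual $\|\cdot\|_*$. Fix an arbitrary $\x\in\Omega$. If $f(\x)=f_*$ then the right-hand side vanishes while the left-hand side is nonnegative, so the claim holds trivially; I therefore assume $f(\x)>f_*$ in what follows.

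First I would fix a nearest optimal point. Since $\Omega_*$ is a closed subset of the compact set $\Omega$, the minimum defining the HEB condition is attained, so there exists $\w\in\Omega_*$ achieving $\min_{\u\in\Omega_*}\|\x-\u\|$, with $f(\w)=f_*$. Next I would invoke convexity of $f$: the gradient inequality at $\x$ gives $f(\w)\geq f(\x)+\nabla f(\x)^{\top}(\w-\x)$, which rearranges to
\begin{align*}
f(\x)-f_*\leq \nabla f(\x)^{\top}(\x-\w).
\end{align*}

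The key step is to bound the inner product by the norm/dual-norm pairing, $\nabla f(\x)^{\top}(\x-\w)\leq\|\nabla f(\x)\|_*\,\|\x-\w\|$, and then substitute the HEB estimate $\|\x-\w\|\leq c(f(\x)-f_*)^{\theta}$ to obtain
\begin{align*}
f(\x)-f_*\leq c\,\|\nabla f(\x)\|_*\,(f(\x)-f_*)^{\theta}.
\end{align*}
Dividing both sides by $c(f(\x)-f_*)^{\theta}$, which is strictly positive under the assumption $f(\x)>f_*$, yields the claimed bound $\|\nabla f(\x)\|_*\geq \tfrac{1}{c}(f(\x)-f_*)^{1-\theta}$.

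I do not expect a substantive obstacle here: the lemma is essentially a one-line consequence of convexity once the correct pairing is identified. The only points requiring a little care are (i) confirming that the minimizer over $\Omega_*$ exists so that a genuine nearest optimal point $\w$ can be fixed, and (ii) separating out the degenerate case $f(\x)=f_*$, since one cannot divide by $(f(\x)-f_*)^{\theta}$ there. Neither of these causes real difficulty.
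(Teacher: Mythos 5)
Your proof is correct and follows essentially the same route as the paper's: fix the nearest optimal point, apply the gradient inequality of convexity, bound the inner product by the norm/dual-norm pairing, and substitute the HEB estimate before dividing. The only difference is that you explicitly handle the degenerate case $f(\x)=f_*$ and the attainment of the minimizer, which the paper leaves implicit.
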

\begin{proof}
Let $\x_*$ denote the optimal solution in $\Omega_*$ that is closest to $\x$ measured in $\|\cdot\|$.  
By convexity of $f(\cdot)$, we have
\begin{align*}
f(\x_*)\geq f(\x) + \nabla f(\x)^{\top}(\x_* - \x).
\end{align*}
Thus, 
\begin{align*}
f(\x) - f(\x_*)\leq \|\nabla f(\x)\|_* \|\x - \x_*\|\leq c(f(\x) - f_*)^{\theta} \|\nabla f(\x)\|_* .
\end{align*}
As a result, 
\begin{align*}
\|\nabla f(\x)\|_*\geq \frac{1}{c}(f(\x) - f_*)^{1-\theta}.
\end{align*}
\end{proof}
The second lemma is from \citep{pmlr-v37-garbera15}.
\begin{lemma}\label{lem:2}
For the FW method given in Algorithm~\ref{alg:0}, for $t=0,\ldots, $ we have
\begin{align*}
f(\x_{t+1}) - f_*\leq (f(\x_t) - f_*)\max\left\{\frac{1}{2}, \left(1 - \frac{\alpha \|\nabla f(\x_t)\|_*}{8L_f}\right)\right\}.
\end{align*}
\end{lemma}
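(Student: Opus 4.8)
The plan is to combine the Frank--Wolfe descent inequality \eqref{eqn:key} with a sharpened curvature bound that is the only place where the $\alpha$-strong convexity of $\Omega$ enters. Abbreviate $\g_t := \nabla f(\x_t)$, let $d_t := \|\y_t - \x_t\|$, and write $g_t := -\g_t^{\top}(\y_t - \x_t) \ge 0$ for the Frank--Wolfe gap (the sign is correct since $\y_t$ minimizes $\g_t^{\top}\cdot$ over $\Omega$ and $\x_t\in\Omega$). First I would lower bound $g_t$ in terms of $d_t$. By definition of the dual norm there is a unit vector $\z$ with $\g_t^{\top}\z = -\|\g_t\|_*$. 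Applying the definition of $\alpha$-strong convexity with $\x=\x_t$, $\y=\y_t$ and $\gamma=1/2$ produces a feasible point $\w = \tfrac12\x_t + \tfrac12\y_t + \tfrac{\alpha}{8}d_t^2\,\z \in \Omega$. Since $\y_t$ minimizes $\g_t^{\top}\cdot$ over $\Omega$, we have $\g_t^{\top}\y_t \le \g_t^{\top}\w$; expanding the right-hand side, the $\tfrac12\g_t^{\top}\y_t$ terms cancel and, using $\g_t^{\top}\z=-\|\g_t\|_*$, one obtains the curvature inequality
\[
g_t \;\ge\; \frac{\alpha}{4}\,d_t^2\,\|\g_t\|_*,
\qquad\text{equivalently}\qquad
d_t^2 \;\le\; \frac{4\,g_t}{\alpha\,\|\g_t\|_*}.
\]

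Next I would substitute this into \eqref{eqn:key}. Writing $\g_t^{\top}(\y_t-\x_t)=-g_t$ and replacing $d_t^2$ by the bound above gives, for every $\eta\in[0,1]$,
\[
f(\x_{t+1}) - f_* \;\le\; (f(\x_t)-f_*) \;-\; g_t\,\eta\Big(1 - \frac{2\eta L_f}{\alpha\,\|\g_t\|_*}\Big).
\]
Because Option I performs an exact line search, this holds for the true $\x_{t+1}$ for every feasible $\eta$ on the right. I would also record the elementary bound $g_t \ge f(\x_t)-f_*$, which follows from $\g_t^{\top}(\y_t-\x_t)\le \g_t^{\top}(\x_*-\x_t)\le f_*-f(\x_t)$ for any $\x_*\in\Omega_*$ (optimality of $\y_t$ and convexity of $f$).

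Finally I would maximize $\psi(\eta):=\eta\big(1-2\eta L_f/(\alpha\|\g_t\|_*)\big)$ over $\eta\in[0,1]$. The unconstrained maximizer is $\eta^\star = \alpha\|\g_t\|_*/(4L_f)$. If $\eta^\star\le 1$, then $\psi(\eta^\star)=\alpha\|\g_t\|_*/(8L_f)\in[0,\tfrac12]$, and combining with $g_t\ge f(\x_t)-f_*$ yields the factor $1-\alpha\|\g_t\|_*/(8L_f)$. If $\eta^\star>1$ (that is, $\alpha\|\g_t\|_*>4L_f$), then $\psi$ is increasing on $[0,1]$, so taking $\eta=1$ gives $\psi(1)=1-2L_f/(\alpha\|\g_t\|_*)>\tfrac12$ and hence the factor $1/2$. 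Taking the larger of the two resulting contraction factors produces exactly $\max\{1/2,\,1-\alpha\|\g_t\|_*/(8L_f)\}$.

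I expect the curvature inequality of the first paragraph to be the main obstacle: it is where strong convexity is actually used, and obtaining the correct constant hinges on choosing $\z$ to be precisely the dual-norm-attaining direction and on the cancellation after invoking the optimality of $\y_t$. The concluding case split is routine but unavoidable, since the unconstrained optimal step $\eta^\star$ may exceed $1$, and it is exactly this constraint $\eta\le 1$ that generates the $\max$ with $1/2$.
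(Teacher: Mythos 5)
Your proof is correct. The paper does not actually prove this lemma itself---it imports it from \citep{pmlr-v37-garbera15}---and your argument is essentially a faithful reconstruction of that source's proof: the midpoint construction with $\gamma=1/2$ and the dual-norm-attaining direction $\z$, which yields $g_t\ge\frac{\alpha}{4}d_t^2\|\g_t\|_*$, followed by optimizing the quadratic in $\eta$ over $[0,1]$ with the case split at $\eta^\star=1$ and the bound $g_t\ge f(\x_t)-f_*$, is exactly the standard route, and your constants check out.
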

Finally, we prove the following theorem. 
\begin{thm}
For every $t\geq 1$, we have
\begin{align*}
f(\x_t) -f _* \leq \left\{\begin{array}{lc}\frac{C}{(t+k)^{1/(1-\theta)}}& \text{ if }\theta\in [0,1)\\ \\  \rho^{t}(f(\x_0) - f_*)& \text{ otherwise}\end{array}\right.
\end{align*}
where $k\geq \max\left\{\frac{2 - 2^{1-\theta}}{2^{1-\theta} -1}, C'\right\}$,  $C\geq \max\left\{\frac{L_fD^2(1+k)^{\frac{1}{1-\theta}}}{2}, 2\left(\frac{C'}{M}\right)^{\frac{1}{1-\theta}}\right\}$, $C' = \frac{1}{1-\theta -\theta(2^{1-\theta}-1)}$, and $\rho=\max\left\{\frac{1}{2}, 1 - \frac{\alpha}{8cL_f}\right\}$. 
\end{thm}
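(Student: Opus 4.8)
The plan is to collapse the whole argument to a single scalar recursion for the gap $\Delta_t := f(\x_t)-f_*$ and then analyze that recursion by induction. Feeding the lower bound $\|\nabla f(\x_t)\|_*\ge \frac1c\Delta_t^{1-\theta}$ of Lemma~\ref{lem:1} into Lemma~\ref{lem:2} and using $\Delta_t\ge 0$ gives the master inequality
\begin{align*}
\Delta_{t+1}\le \max\left\{\tfrac12\Delta_t,\ \Delta_t-M\Delta_t^{2-\theta}\right\},\qquad M:=\frac{\alpha}{8cL_f}.
\end{align*}
From here on no convexity, smoothness, or set geometry is needed except for one fact used to seed the induction, so the problem is purely one-dimensional.

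The case $\theta=1$ is immediate: then $\Delta_t^{2-\theta}=\Delta_t$, the inequality becomes $\Delta_{t+1}\le\rho\Delta_t$ with $\rho=\max\{1/2,1-M\}$, and unrolling yields $\Delta_t\le\rho^t(f(\x_0)-f_*)$. All the work is in $\theta\in[0,1)$, where I set $p:=\frac1{1-\theta}$ and prove $\Delta_t\le \frac{C}{(t+k)^p}$ by induction on $t\ge1$. The base case $t=1$ is the only place an ``external'' estimate enters: taking $\eta=1$ in \eqref{eqn:key} at $t=0$ and using $(\y_0-\x_0)^\top\nabla f(\x_0)\le -\Delta_0$ gives the standard bound $\Delta_1\le \frac{L_fD^2}{2}$ (with $D$ the diameter of $\Omega$), and $\frac{L_fD^2}{2}\le \frac{C}{(1+k)^p}$ is exactly the first lower bound on $C$.

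For the inductive step I split on the active branch of the max, writing $s:=t+k$. On the \emph{geometric branch} ($\Delta_t$ large, $M\Delta_t^{1-\theta}\ge\frac12$) one has $\Delta_{t+1}\le\frac12\Delta_t\le\frac{C}{2s^p}$, which is $\le\frac{C}{(s+1)^p}$ exactly when $(1+\frac1s)^p\le2$; since $t\ge1$ this holds once $1+k\ge\frac1{2^{1-\theta}-1}$, the first lower bound on $k$. On the \emph{polynomial branch} ($\Delta_t$ small) one has $\Delta_{t+1}\le g(\Delta_t)$ with $g(x):=x-Mx^{2-\theta}$, and the key observation is that this branch confines $\Delta_t$ below $\tilde x:=(2M)^{-1/(1-\theta)}$, which lies below the turning point of $g$; hence $g$ is increasing there and the hypothesis can be pushed through $g$. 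Using $p(2-\theta)=p+1$, the target $g(\frac{C}{s^p})\le\frac{C}{(s+1)^p}$ reduces, after dividing by $C$ and bounding $\frac1{s^p}-\frac1{(s+1)^p}\le\frac{p}{s^{p+1}}$, to an inequality of the form $MC^{1-\theta}\ge\text{const}$, i.e.\ the second lower bound on $C$.

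The main obstacle is making one pair $(C,k)$ close the induction in both branches at once, in particular handling the polynomial branch when the inductive envelope $\frac{C}{s^p}$ exceeds the monotonicity threshold of $g$. The clean fix is to push the hypothesis through $g$ only up to $\min\{\tilde x,\frac{C}{s^p}\}$; on the boundary $g(\tilde x)=\tilde x/2$, so that sub-case collapses back to the same $(1+\frac1s)^p\le2$ requirement as the geometric branch. Reconciling the thresholds is the source of the stated constants: once $s\ge\frac1{2^{1-\theta}-1}$ one has $\frac{s+1}{s}\le2^{1-\theta}$, and carrying this slack through the polynomial-branch estimate is what inflates the naive constant $p$ into $C'=\frac1{1-\theta\,2^{1-\theta}}$ (note the identity $1-\theta-\theta(2^{1-\theta}-1)=1-\theta\,2^{1-\theta}$) and forces the extra factor $2$ in $C$ together with the requirement $k\ge C'$. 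I expect essentially all the effort, and the only genuinely delicate part, to be this constant-chasing in the polynomial branch; everything else is bookkeeping.
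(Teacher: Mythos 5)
Your proposal is correct and follows essentially the same route as the paper: the identical scalar recursion $h_{t+1}\le h_t\max\{1/2,\,1-Mh_t^{1-\theta}\}$ obtained from Lemmas~\ref{lem:1} and~\ref{lem:2}, the same base case $\Delta_1\le L_fD^2/2$ forcing the first bound on $C$, and the same induction split on the two branches of the max with the same origin for the conditions on $k$ and the second bound on $C$. The only (cosmetic) divergence is in closing the polynomial branch, where you exploit monotonicity of $g(x)=x-Mx^{2-\theta}$ on $[0,\tilde x]$ and a mean-value bound, while the paper instead sandwiches $h_t$ between $C/(2(t+k)^{1/\beta})$ and $C/(t+k)^{1/\beta}$ and uses the $(1+C'x)(1-C'x)\le 1$ identity; both yield the stated constants.
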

{\bf Remark. }In order to find an $\epsilon$-approximate solution $\x_t$ such that $f(\x_t) - f_* \leq \epsilon$, the iteration complexity of FW method is $O(\max(1/\epsilon^{1-\theta}, \log(1/\epsilon)))$ with $\theta\in[0,1]$.
\begin{proof}
When $\theta=1$, the conclusion is trivial, which follows directly from Lemma~\ref{lem:2}. Next, we prove for $\theta\in[0,1)$. 
Let $\beta=1-\theta$. $h_t = f(\x_t) - f_*$. Combining Lemma~\ref{lem:1} and Lemma~\ref{lem:2}, we have
\begin{align}\label{eqn:max}
h_{t+1} \leq h_t \max\left\{\frac{1}{2}, 1 - \frac{\alpha}{8cL_f}h_t^{\beta}\right\} =  h_t \max\left\{\frac{1}{2}, 1 - M h_t^{\beta}\right\}
\end{align}

We prove by induction that $h_t\leq \frac{C}{(t+k)^{1/\beta}}$. 

For the case $t=1$, following~(\ref{eqn:key}) we have
\begin{align*}
h_1\leq  h_0(1-\eta) + \frac{L_f\eta^2D^2}{2} \leq \max\left\{ \frac{L_fD^2}{2}, h_0 \right\} \leq \frac{L_fD^2}{2}, \forall \eta\in[0,1],
\end{align*}
where we use the fact that $h_0 = f(\x_0) - f(\x_*) \le \nabla f(\x_*)^\top(\x_0 - \x_*) + \frac{L_f}{2}\|\x_0-\x_*\|^2 =  \frac{L_f}{2}\|\x_0-\x_*\|^2 \leq  \frac{L_fD^2}{2}$].
As long as $C/(1+k)^{1/\beta}\geq L_fD^2/2$, we have the conclusion holds for $t=1$.

Next, we consider $t\geq 1$. First assume that the max operation in~(\ref{eqn:max}) gives $1/2$, i.e., 
\begin{align*}
h_{t+1}\leq \frac{h_t }{2}\leq \frac{C}{2(t+k)^{1/\beta}}\leq \frac{C}{(t+1+k)^{1/\beta}}\frac{(t+1+k)^{1/\beta}}{2(t+k)^{1/\beta}}\leq \frac{C}{(t+1+k)^{1/\beta}},
\end{align*}
where the last inequality holds as long as 
\begin{align*}
\frac{(t+1+k)^{1/\beta}}{2(t+k)^{1/\beta}}\leq 1, \forall t\geq 1, \quad\text{i.e.,}\quad 1 + \frac{1}{t+k}\leq 2^\beta, \forall t\geq 1, \quad\text{i.e.,}\quad  k\geq \frac{2 - 2^{\beta}}{2^\beta -1}.
\end{align*}
Next, consider the case that the max operation is the second argument. In this case, if $h_t\leq\frac{C}{2(t+k)^{1/\beta}}$, the same conclusion holds under the above condition of $k$. Otherwise, $h_t\geq \frac{C}{2(t+k)^{1/\beta}}$. We have
\begin{align*}
h_{t+1}\leq h_t (1-Mh_t^{\beta})&\leq \frac{C}{(t+k)^{1/\beta}}\left(1 - M\left(\frac{C}{2}\right)^{\beta}\frac{1}{t + k} \right)\\
&\leq \frac{C}{(t+k+1)^{1/\beta}}\frac{(t+k+1)^{1/\beta}}{(t+k)^{1/\beta}}\left(1 - M\left(\frac{C}{2}\right)^{\beta}\frac{1}{t + k} \right)\\
&\leq  \frac{C}{(t+k+1)^{1/\beta}}\left(1 +\frac{C'}{t+k}\right)\left(1 - \frac{C'}{t+k}\right)
\end{align*}
To show the last inequality holds, we can set $C' = \frac{1}{\beta -(1-\beta)(2^{\beta}-1)} > 1$ and $C\geq 2(C'/M)^{1/\beta}$. To see this, we need to show that 
\begin{align*}
\log(1+ C'x) - \frac{1}{\beta}\log(1+x)\geq 0, \forall 0\leq x\leq 2^{\beta}-1. 
\end{align*}
In fact, due to $\frac{C'}{1+C'x} - \frac{1}{\beta(1+x)}\geq 0, \forall 0\leq x\leq 2^\beta -1$, 
it gives $1 + C'x\geq (1+x)^{1/\beta}$ holds for all $0\leq x\leq 2^\beta -1$. Plugging $x=1/(t+k)\leq 2^\beta -1$ into this inequality, we get what we want $1+C'/(t+k)\geq (1+\frac{1}{t+k})^{1/\beta}$.
\end{proof}

\section{Examples}
Lastly, we give examples exhibiting the HEB condition with $\theta=1/2$. In particular, let us 
 consider 
 \begin{align}\label{ex:1}
 \min_{g(\x)\leq r} f(\x)
 \end{align}
 where $g(\x)$ is a non-negative, strongly and smooth function. It is shown that $\Omega=\{\x: g(\x)\leq r\}$ is a strongly convex set~\citep{pmlr-v37-garbera15}. 
 \begin{lemma}
 Assume that $\min_{\x}f(\x)<\min_{g(\x)\leq r}f(\x)$ and there exists a $\x_0$ such that $g(\x_0) < r$, then the above problem satisfies HEB with $\theta=1/2$.
 \end{lemma}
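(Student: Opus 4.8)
The plan is to show the stated problem satisfies a \emph{quadratic growth} condition, which is exactly HEB with $\theta=1/2$: once we have $f(\x)-f_*\geq \frac{\mu}{2}\|\x-\x_*\|^2$ for a fixed optimal $\x_*$ and some $\mu>0$, taking square roots and using $\min_{\w\in\Omega_*}\|\x-\w\|\leq\|\x-\x_*\|$ yields $\min_{\w\in\Omega_*}\|\x-\w\|\leq c(f(\x)-f_*)^{1/2}$ with $c=\sqrt{2/\mu}$. The single structural fact I want to extract is that the constraint $g(\x)\leq r$ is \emph{active} at the optimum with a \emph{strictly positive} Lagrange multiplier; the strong convexity of $g$ then converts feasibility of $\x$ into a quadratic penalty measured by the distance to the optimum, and convexity of $f$ transfers that penalty to the objective gap.

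First I would fix any optimal solution $\x_*$ and establish the KKT conditions. The hypothesis $g(\x_0)<r$ is Slater's condition, so strong duality holds and there is a multiplier $\lambda\geq 0$ with $\nabla f(\x_*)+\lambda\nabla g(\x_*)=0$ and $\lambda(g(\x_*)-r)=0$. If the constraint were inactive or $\lambda=0$, then $\nabla f(\x_*)=0$, so $\x_*$ would be an unconstrained minimizer of $f$, contradicting the assumption $\min_{\x}f(\x)<\min_{g(\x)\leq r}f(\x)$. Hence $g(\x_*)=r$ and $\lambda>0$ (and in particular $\nabla g(\x_*)\neq 0$).

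Next I would chain three inequalities for an arbitrary feasible $\x$. Writing $\mu_g>0$ for the strong convexity parameter of $g$, strong convexity gives
\begin{align*}
g(\x)\geq g(\x_*)+\nabla g(\x_*)^{\top}(\x-\x_*)+\frac{\mu_g}{2}\|\x-\x_*\|^2,
\end{align*}
and since $g(\x)\leq r=g(\x_*)$ this rearranges to $-\nabla g(\x_*)^{\top}(\x-\x_*)\geq \frac{\mu_g}{2}\|\x-\x_*\|^2$. Substituting the stationarity relation $\nabla g(\x_*)=-\frac{1}{\lambda}\nabla f(\x_*)$ turns it into $\nabla f(\x_*)^{\top}(\x-\x_*)\geq \frac{\lambda\mu_g}{2}\|\x-\x_*\|^2$. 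Finally, convexity of $f$ gives $f(\x)-f_*\geq \nabla f(\x_*)^{\top}(\x-\x_*)$, so combining yields the quadratic growth bound $f(\x)-f_*\geq \frac{\lambda\mu_g}{2}\|\x-\x_*\|^2$, whence $\min_{\w\in\Omega_*}\|\x-\w\|\leq \sqrt{2/(\lambda\mu_g)}\,(f(\x)-f_*)^{1/2}$, i.e.\ HEB with $\theta=1/2$ and $c=\sqrt{2/(\lambda\mu_g)}$.

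The main obstacle is the first step: rigorously justifying the active constraint together with the strictly positive multiplier, since everything downstream is a mechanical chain of convexity inequalities. This is precisely where both hypotheses enter — Slater's condition guarantees the existence of the KKT multiplier, and the strict gap $\min_{\x}f<\min_{g\leq r}f$ forces $\lambda>0$. A pleasant feature worth noting is that the argument fixes a \emph{single} optimal $\x_*$ and controls $\min_{\w\in\Omega_*}\|\x-\w\|$ from above by $\|\x-\x_*\|$, so I never need $\Omega_*$ to be a singleton; moreover the strong convexity of $f$ is not used at all, as the required curvature comes entirely from the constraint set through $g$.
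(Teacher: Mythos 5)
Your proposal is correct, and it reaches the conclusion by the same underlying mechanism as the paper: Slater's condition supplies a Lagrange multiplier, the gap $\min_{\x}f(\x)<\min_{g(\x)\leq r}f(\x)$ forces that multiplier to be strictly positive and the constraint to be active at every optimum, and the strong convexity of $g$ scaled by the positive multiplier then yields quadratic growth of $f$ over $\Omega$, i.e.\ HEB with $\theta=1/2$. The execution differs in three genuine ways, all in your favor or neutral. First, you use the gradient form of the KKT conditions, $\nabla f(\x_*)+\lambda\nabla g(\x_*)=0$, and chain three first-order inequalities (strong convexity of $g$ at $\x_*$, stationarity, convexity of $f$), whereas the paper uses the value form $\min_{\x\in\Omega}f(\x)=\min_{\x}\{f(\x)+\lambda^*(g(\x)-r)\}$ and invokes quadratic growth of the strongly convex Lagrangian around its unconstrained minimizer; the two computations are algebraically equivalent and give the same constant $c=\sqrt{2/(\lambda\mu_g)}$, though yours leans on differentiability of $f$ and $g$ (which the smoothness hypotheses do supply). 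Second, your argument that $\lambda>0$ is by contraposition --- $\lambda=0$ would make $\x_*$ a stationary, hence global, minimizer of $f$ --- which is cleaner than the paper's route through the unconstrained minimizer $\u_*$ and does not require the unconstrained infimum to be attained. Third, you observe that fixing a single $\x_*\in\Omega_*$ and bounding $\min_{\w\in\Omega_*}\|\x-\w\|\leq\|\x-\x_*\|$ suffices, which lets you skip the paper's digression proving that $\Omega_*$ is a singleton; that digression is indeed not needed for the bound (and in any case the singleton property would follow for free from the quadratic growth you establish). The one point to state explicitly in a final write-up is that the strong convexity inequality for $g$ is taken with respect to the same norm $\|\cdot\|$ used in the HEB definition.
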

 \begin{proof}
 We set $\Omega = \{\x: g(\x) \leq r \}$ and $\Omega_* = \arg\min_{g(\x)\leq r} f(\x)$, and we define an indicator function as follows,
 \begin{align*}
 I_{\Omega}(\x) = 
 \left\{ \begin{array}{rl}
         0 & \text{if~}\x\in\Omega, \\ 
        +\infty & \text{if~}\x\notin\Omega.
 \end{array}\right.
 \end{align*}
Then the problem of (\ref{ex:1}) can be written as
 \begin{align*}
 \min_{\x} \widehat f(\x) := f(\x) + I_{\Omega}(\x),
 \end{align*}
 and thus we also have $\Omega_* = \arg\min_{\x}\widehat f(\x)$.
 We only need to consider any fixed $\x_*\in\Omega_*$. By the condition of $g(\x_0) < r$ and Corollary 28.2.1 of~\citep{rockafellar1970convex}, there exists $\lambda^*\geq 0$ such that
 \begin{align}\label{ex1:ineq:1}
 \nonumber \widehat f(\x_*) = & \min_{\x} \widehat f(\x) = f(\x_*) = \min_{\x\in\Omega} f(\x) = \min_{\x} \{f(\x)+\lambda^*(g(\x)-r)\}\\
 \leq & f(\x_*)+\lambda^*(g(\x_*)-r) \leq f(\x_*),
 \end{align}
 where the first inequality is due to $\x_*\in\Omega_*$; the second inequality uses the fact that $\x_*\in\Omega_*$ hence $g(\x_*)-r\leq 0$.
 Then, equality holds for (\ref{ex1:ineq:1}), which implies $f(\x_*)+\lambda^*(g(\x_*)-r) = f(\x_*)$, that is,
  \begin{align}\label{ex1:ineq:2}
\lambda^*(g(\x_*)-r) = 0.
 \end{align}
 On the other hand, let $\u_* \in \arg\min_{\x} f(\x)$,  then based on the assumption of $\min_{\x}f(\x)<\min_{g(\x)\leq r}f(\x)$ we know $\u_* \notin \Omega_*$ hence $\u_* \notin \Omega$. By (\ref{ex1:ineq:1}), we also know
 \begin{align*}
 f(\u_*) < \min_{\x\in\Omega} f(\x) =  \min_{\x} \{f(\x)+\lambda^*(g(\x)-r)\} \leq f(\u_*)+\lambda^*(g(\u_*)-r),
 \end{align*}
 which implies 
  \begin{align}\label{ex1:ineq:3}
 \lambda^*(g(\u_*)-r)>0.
 \end{align}
 Since $\u_* \notin \Omega$, then $g(\u_*)-r > 0$. In order to have (\ref{ex1:ineq:3}), we need $\lambda^* > 0$. Thus, by (\ref{ex1:ineq:2}) we have
   \begin{align}\label{ex1:ineq:4}
g(\x_*)-r = 0.
 \end{align}
 For any such $\lambda^*>0$, then by Theorem 28.1 of~\citep{rockafellar1970convex}, we also have
 \begin{align}\label{ex1:ineq:5}
 \Omega_* = \{\x: g(\x) = r\} \cap \arg\min_{\x}\{f(\x) + \lambda^*(g(\x)-r)\}.
 \end{align}
 Since $g(\x)$ is strongly convex, $f(\x)$ is convex and $\lambda^*>0$, then $f(\x) + \lambda^*(g(\x)-r)$ is also strongly convex, implying that 
 $\v_* = \arg\min_{\x}\{f(\x) + \lambda^*(g(\x)-r)\}$ is a unique constant.
 Due to $\lambda^*>0$, $g(\v_*)$ is also a constant~\citep{li2017calculus}.
 By (\ref{ex1:ineq:5}) we have $g(\v_*) = g(\x_*) = r$.
 Therefore,
  \begin{align}\label{ex1:ineq:6}
 \Omega_* =  \arg\min_{\x}\{f(\x) + \lambda^*(g(\x)-r)\}.
 \end{align}
 By the strong convexity of $f(\x) + \lambda^*(g(\x)-r)$ we know for any $\x\in\Omega$ and $\x_*\in\Omega_*\subseteq \Omega$,
\begin{align*}
\frac{1}{c^2}\|\x-\x_*\|^2 \leq f(\x) + \lambda^*(g(\x)-r) - [f(\x_*) + \lambda^*(g(\x_*)-r)],
\end{align*}
where $c>0$.
Since $\lambda^*>0$, $g(\x)-r \leq 0$ and $g(\x_*)-r=0$, we get
 \begin{align*}
\frac{1}{c^2}\|\x-\x_*\|^2 \leq f(\x)  - f(\x_*).
\end{align*}
Therefore, for any $\x\in\Omega$
\begin{align*}
\min_{\w\in\Omega_*} \|\x - \w\| \leq c(f(\x) - f_*)^{1/2},
\end{align*}
which implies $\theta = 1/2$.
 \end{proof}

\bibliography{all}

\end{document}